\theoremstyle{plain}
\newtheorem{thm}{Theorem}[section]
\newtheorem{lem}[thm]{Lemma}
\theoremstyle{definition}
\newtheorem{rem}[thm]{Remark}
 \font\cyr=wncyr10
 \newcommand{\nc}{\newcommand}
\nc{\per}[1]{\underset{#1}{\boldsymbol \pi}\,}
 \nc{\MT}{{\rm MT}}
 \nc{\XX}{{X}}
 \nc{\gF}{{\varPhi}}
 \nc{\ot}{\otimes}
\nc{\bfrac}{\displaystyle\bfrac}
 \nc{\evalE}{{\mathfrak Z}}
 \nc{\tG}{\tilde{G}}
 \nc{\wht}{\widehat}
 \nc{\bwg}{{\bigwedge}}
 \nc{\wg}{{\wedge}}
 \nc{\mmu}{{\boldsymbol{\mu}}}
 \nc{\mal}{{{\scriptstyle \maltese}}}
 \nc{\fA}{{\mathfrak A}}
 \nc{\HH}{{\mathfrak H}}
 \nc{\ra}{\rightarrow}
 \nc{\ors}{{\bfs}}
 \nc{\orr}{{\bfr}}
 \nc{\os}{{\overset}}
 \nc{\G}{{\mathbb G}}
 \nc{\F}{{\mathbb F}}
 \nc{\Z}{{\mathbb Z}}
 \nc{\R}{{\mathbb R}}
 \nc{\N}{{\mathbb N}}
 \nc{\ZN}{{\mathbb Z_{\ge 0}}}
 \nc{\Q}{{\mathbb Q}}
 \nc{\C}{{\mathbb C}}
 \nc{\CP}{{\mathbb{CP}}}
 \nc{\Cnn}{{\mathbb C}_{\ge 0}}
 \nc{\Cp}{{\mathbb C}_{>0}}
 \nc{\MPV}{{\mathcal{MPV}}}
 \nc{\tB}{{\tilde B}}
 \nc{\tP}{{P}}
 \nc{\tx}{{\tilde x}}
 \nc{\ty}{{\tilde y}}
 \nc{\tz}{{\tilde z}}
 \nc{\tw}{{\tilde w}}
 \nc{\tQ}{{Q}}
\nc{\gemn}{{\mathfrak n}}
 \nc{\suf}{{\ast\,}}
 \nc{\sufq}{{\ast_q\,}}
 \nc{\gam}{{\gamma}}
 \nc{\gG}{{\Gamma}}
 \nc{\om}{{\omega}}
 \nc{\vep}{{\varepsilon}}
 \nc{\ga}{{\alpha}}
 \nc{\gl}{{\lambda}}
 \nc{\gb}{{\beta}}
 \nc{\gf}{{\varphi}}
 \nc{\gd}{{\delta}}
 \nc{\orgd}{{\vec \gd\,}}
 \nc{\gs}{{\sigma}}
 \nc{\gth}{{\theta}}
 \nc{\gx}{{\xi}}
 \nc{\gS}{{\Sigma}}
 \nc{\gk}{{\kappa}}
  \nc{\gz}{{\zeta}}
 \nc{\tgz}{{\tilde{\zeta}}}
 \nc{\gO}{{\Omega}}
 \nc{\sif}{{\mathcal S}}
 \nc{\gt}{{\tau}}
 \nc{\Lra}{\Longrightarrow}
 \nc{\lra}{\longrightarrow}
 \nc{\lmaps}{\longmapsto}
 \nc{\fS}{{\mathfrak S}}
 \nc{\DD}{{\mathfrak D}}
 \nc{\Llra}{\Longleftrightarrow}
 \nc{\ol}{\overline}
 \nc{\ola}{\overleftarrow}
 \nc{\lms}{\longmapsto}
 \nc{\cv}{{{\mathsf c}{\mathsf v}}}
 \nc{\zq}{{\zeta_q}}
 \nc\qup{{q\uparrow 1}}
 \nc{\us}{\underset}
 \nc{\tn}{{\tilde{n}}}
 \nc{\gD}{{\Delta}}
 \nc{\bi}{{\bf i}}
 \nc{\bfone}{{\bf 1}}
 \nc{\bfa}{{\bf a}}
 \nc{\bfb}{{\bf b}}
 \nc{\bfc}{{\bf c}}
 \nc{\bfd}{{\bf d}}
 \nc{\bfe}{{\bf e}}
 \nc{\bff}{{\bf f}}
 \nc{\bfg}{{\bf g}}
 \nc{\bfi}{{\bf i}}
 \nc{\bfj}{{\bf j}}
\nc{\obfi}{{\overrightarrow{\boldsymbol \imath}}}
\nc{\obfj}{{\overrightarrow{\boldsymbol \jmath}}}
\nc{\obfd}{{\overrightarrow{\bf d}}}
\nc{\veps}{{\varepsilon}}
 \nc{\bfn}{{\bf n}}
 \nc{\bfl}{{\bf l}}
 \nc{\bfk}{{\bf k}}
 \nc{\bfm}{{\bf m}}
 \nc{\bfo}{{\bf o}}
 \nc{\bfp}{{\bf p}}
 \nc{\bfq}{{\bf q}}
 \nc{\bfr}{{\bf r}}
 \nc{\bfs}{{\bf s}}
 \nc{\bft}{{\bf t}}
 \nc{\bfu}{{\bf u}}
 \nc{\bfv}{{\bf v}}
 \nc{\bfw}{{\bf w}}
 \nc{\bfx}{{\bf x}}
 \nc{\bfy}{{\bf y}}
 \nc{\bfz}{{\bf z}}
 \nc{\bfB}{{\bf B}}
 \nc{\bfP}{{\bf P}}
 \nc{\bfQ}{{\bf Q}}
 \nc{\bfY}{{\bf Y}}
 \nc{\bfgb}{{\boldsymbol \gb}}
 \nc{\bfga}{{\boldsymbol \ga}}
 \nc{\bfrho}{{\boldsymbol \rho}}
 \nc{\bfchi}{{\boldsymbol \chi}}
 \nc{\QX}{{\Q\langle \bfX\rangle}}
 \nc{\QY}{{\Q\langle \bfY\rangle}}
 \nc{\CX}{{\C\langle \bfX\rangle}}
 \nc{\CY}{{\C\langle \bfY\rangle}}
 \nc{\QXX}{{\Q\langle\!\langle \bfX\rangle\!\rangle}}
 \nc{\QYY}{{\Q\langle\!\langle \bfY\rangle\!\rangle}}
 \nc{\CXX}{{\C\langle\!\langle \bfX\rangle\!\rangle}}
 \nc{\CYY}{{\C\langle\!\langle \bfY\rangle\!\rangle}}
 \nc{\bbA}{{\mathbb A}}
 \nc{\bbB}{{\mathbb B}}
 \nc{\bbC}{{\mathbb C}}
 \nc{\bbD}{{\mathbb D}}
 \nc{\bbE}{{\mathbb E}}
 \nc{\bbF}{{\mathbb F}}
 \nc{\bbG}{{\mathbb G}}
 \nc{\bbH}{{\mathbb H}}
 \nc{\bbI}{{\mathbb I}}
 \nc{\bbJ}{{\mathbb J}}
 \nc{\bbK}{{\mathbb K}}
 \nc{\bbL}{{\mathbb L}}
 \nc{\bbM}{{\mathbb M}}
 \nc{\bbN}{{\mathbb N}}
 \nc{\bbO}{{\mathbb O}}
 \nc{\bbP}{{\mathbb P}}
 \nc{\bbQ}{{\mathbb Q}}
 \nc{\bbR}{{\mathbb R}}
 \nc{\bbS}{{\mathbb S}}
 \nc{\bbT}{{\mathbb T}}
 \nc{\bbU}{{\mathbb U}}
 \nc{\bbV}{{\mathbb V}}
 \nc{\bbW}{{\mathbb W}}
 \nc{\bbX}{{\mathbb X}}
 \nc{\bbY}{{\mathbb Y}}
 \nc{\bbZ}{{\mathbb Z}}
 \nc{\bba}{{\mathbb a}}
 \nc{\bbb}{{\mathbb b}}
 \nc{\bbc}{{\mathbb c}}
 \nc{\bbd}{{\mathbb d}}
 \nc{\bbe}{{\mathbb e}}
 \nc{\bbf}{{\mathbb f}}
 \nc{\bbg}{{\mathbb g}}
 \nc{\bbh}{{\mathbb h}}
 \nc{\bbi}{{\mathbb i}}
 \nc{\bbk}{{\mathbb k}}
 \nc{\bbl}{{\mathbb l}}
 \nc{\bbm}{{\mathbb m}}
 \nc{\bbn}{{\mathbb n}}
 \nc{\bbo}{{\mathbb o}}
 \nc{\bbp}{{\mathbb p}}
 \nc{\bbq}{{\mathbb q}}
 \nc{\bbr}{{\mathbb r}}
 \nc{\bbs}{{\mathbb s}}
 \nc{\bbt}{{\mathbb t}}
 \nc{\bbu}{{\mathbb u}}
 \nc{\bbv}{{\mathbb v}}
 \nc{\bbw}{{\mathbb w}}
 \nc{\bbx}{{\mathbb x}}
 \nc{\bby}{{\mathbb y}}
 \nc{\bbz}{{\mathbb z}}
 \nc{\MZV}{{\mathcal{MZV}}}
 \nc{\calA}{{\mathcal A}}
 \nc{\calB}{{\mathcal B}}
 \nc{\calC}{{\mathcal C}}
 \nc{\calD}{{\mathcal D}}
 \nc{\calE}{{\mathcal E}}
 \nc{\calF}{{\mathcal F}}
 \nc{\calG}{{\mathcal G}}
 \nc{\calH}{{\mathcal H}}
 \nc{\calI}{{\mathcal I}}
 \nc{\calJ}{{\mathcal J}}
 \nc{\calK}{{\mathcal K}}
 \nc{\calL}{{\mathcal L}}
 \nc{\calM}{{\mathcal M}}
 \nc{\calN}{{\mathcal N}}
 \nc{\calO}{{\mathcal O}}
 \nc{\calP}{{\mathcal P}}
 \nc{\calQ}{{\mathcal Q}}
 \nc{\calR}{{\mathcal R}}
 \nc{\calS}{{\mathcal S}}
 \nc{\calT}{{\mathcal T}}
 \nc{\calU}{{\mathcal U}}
 \nc{\calV}{{\mathcal V}}
 \nc{\calW}{{\mathcal W}}
 \nc{\calX}{{\mathcal X}}
 \nc{\calY}{{\mathcal Y}}
 \nc{\calZ}{{\mathcal Z}}
  \nc{\cala}{{\mathcal a}}
 \nc{\calb}{{\mathcal b}}
 \nc{\calc}{{\mathcal c}}
 \nc{\cald}{{\mathcal d}}
 \nc{\cale}{{\mathcal e}}
 \nc{\calf}{{\mathcal f}}
 \nc{\calg}{{\mathcal g}}
 \nc{\calh}{{\mathcal h}}
 \nc{\cali}{{\mathcal i}}
 \nc{\calj}{{\mathcal j}}
 \nc{\calk}{{\mathcal k}}
 \nc{\call}{{\mathcal l}}
 \nc{\calm}{{\mathcal m}}
 \nc{\caln}{{\mathcal n}}
 \nc{\calo}{{\mathcal o}}
 \nc{\calp}{{\mathsf p}}
 \nc{\calq}{{\mathcal q}}
 \nc{\calr}{{\mathcal r}}
 \nc{\cals}{{\mathcal s}}
 \nc{\calt}{{\mathcal t}}
 \nc{\calu}{{\mathcal u}}
 \nc{\calv}{{\mathcal v}}
 \nc{\calw}{{\mathcal w}}
 \nc{\calx}{{\mathcal x}}
 \nc{\caly}{{\mathcal y}}
 \nc{\calz}{{\mathcal z}}
 \nc{\frakA}{{\mathfrak A}}
 \nc{\frakB}{{\mathfrak B}}
 \nc{\frakC}{{\mathfrak C}}
 \nc{\frakD}{{\mathfrak D}}
 \nc{\frakE}{{\mathfrak E}}
 \nc{\frakF}{{\mathfrak F}}
 \nc{\frakG}{{\mathfrak G}}
 \nc{\frakH}{{\mathfrak H}}
 \nc{\frakI}{{\mathfrak I}}
 \nc{\frakJ}{{\mathfrak J}}
 \nc{\frakK}{{\mathfrak K}}
 \nc{\frakL}{{\mathfrak L}}
 \nc{\frakM}{{\mathfrak M}}
 \nc{\frakN}{{\mathfrak N}}
 \nc{\frakO}{{\mathfrak O}}
 \nc{\frakP}{{\mathfrak P}}
 \nc{\frakQ}{{\mathfrak Q}}
 \nc{\frakR}{{\mathfrak R}}
 \nc{\frakS}{{\mathfrak S}}
 \nc{\frakT}{{\mathfrak T}}
 \nc{\frakU}{{\mathfrak U}}
 \nc{\frakV}{{\mathfrak V}}
 \nc{\frakW}{{\mathfrak W}}
 \nc{\frakX}{{\mathfrak X}}
 \nc{\frakY}{{\mathfrak Y}}
 \nc{\frakZ}{{\mathfrak Z}}
 \nc{\fraka}{{\mathfrak a}}
 \nc{\frakb}{{\mathfrak b}}
 \nc{\frakc}{{\mathfrak c}}
 \nc{\frakd}{{\mathfrak d}}
 \nc{\frake}{{\mathfrak e}}
 \nc{\frakf}{{\mathfrak f}}
 \nc{\frakg}{{\mathfrak g}}
 \nc{\frakh}{{\mathfrak h}}
 \nc{\fraki}{{\mathfrak i}}
 \nc{\frakj}{{\mathfrak j}}
 \nc{\frakk}{{\mathfrak k}}
 \nc{\frakl}{{\mathfrak l}}
 \nc{\frakm}{{\mathfrak m}}
 \nc{\frakn}{{\mathfrak n}}
 \nc{\frako}{{\mathfrak o}}
 \nc{\frakp}{{\mathfrak p}}
 \nc{\frakq}{{\mathfrak q}}
 \nc{\frakr}{{\mathfrak r}}
 \nc{\fraks}{{\mathfrak s}}
 \nc{\frakt}{{\mathfrak t}}
 \nc{\fraku}{{\mathfrak u}}
 \nc{\frakv}{{\mathfrak v}}
 \nc{\frakw}{{\mathfrak w}}
 \nc{\frakx}{{\mathfrak x}}
 \nc{\fraky}{{\mathfrak y}}
 \nc{\frakz}{{\mathfrak z}}
 \nc{\so}{{\mathfrak so}}
 \nc{\sa}{{\mbox{{\scriptsize \cyr x}}}}
 \nc{\slfour}{{\mathfrak sl}_4}
 \nc{\one}{{\bf 1}}
 \nc{\zero}{{\bf 0}}
 \nc{\Qxy}{\Q\langle x,y\rangle}
 \nc{\barN}{{\ol{\mathbb N}}}
\begin{document}
\date{}
\begin{center}
 {\bf\large Restricted Sum Formula of Multiple Zeta Values}
\end{center}

\begin{center}
 {Haiping Yuan\\
York College of Pennsylvania, York, PA 17403}
\end{center}

\begin{center}
 {Jianqiang Zhao\\
Department of Mathematics, Eckerd College, St. Petersburg, FL 33711}
\end{center}

\section{Introduction}

For fixed positive integer $d$ and $d$-tuple of positive integers $(s_1,\dots, s_d)$ with $s_1>1$,
the multiple zeta value $\zeta(s_1,\dots, s_d)$ is defined by
\begin{equation}\label{equ:MZV}
\zeta(s_1,\dots, s_d)=\sum_{k_1>\dots>k_d>0} k_1^{-s_1}\cdots k_d^{-s_d},
\end{equation}
where $d$ is called the \emph{depth} and
$s_1+\dots+s_d$ the \emph{weight}. The double zeta values were
studied by Euler \cite{Euler} who derived many identities such as follows:
\begin{align*}
\sum_{k=2}^{2n-1}(-1)^k\zeta(k,2n-k)&=\frac12\zeta(2n), \\
\sum_{k=2}^{2n-1}\zeta(k,2n-k)&=\zeta(2n),
\end{align*}
from which we can easily get (see \cite[Theorem 1]{GKZ})
\begin{equation}\label{equ:GKZDblZetaEven}
\sum_{k=1}^{n-1}\zeta(2k,2n-2k)=\frac34\zeta(2n).
\end{equation}
Using the stuffle relation $\zeta(2k)\zeta(2n-2k)=\zeta(2k,2n-2k)+\zeta(2n-2k,2k)+\zeta(2n)$
we see immediately
\begin{equation}\label{equ:dbld=2}
\sum_{k=1}^{n-1}\zeta(2k)\zeta(2n-2k)=\frac{2n+1}2 \zeta(2n).
\end{equation}

Recently, Hoffman \cite{Hoffman2012} extended \eqref{equ:GKZDblZetaEven} to arbitrary depths.
Moreover, similar formulas have been obtained for some special type Hurwitz-zeta values \cite{ZTn}
and alternating Euler sums \cite{ZEulerSum}.
In this paper we consider the following restricted sum of multiple zeta values
\begin{equation*}
Q(4n,d)=\sum_{\substack{j_1+\cdots+j_d=n\\ j_1,\dots,j_d > 0}}
\zeta(4j_1,\dots,4j_d).
\end{equation*}
Our main theorem is
\begin{thm}\label{thm:main}
For any positive integers $n\ge d\ge 3,$
\begin{multline*}
Q(4n,d)=\sum_{k=0}^{\lfloor\frac{d-1}{2}\rfloor}
\sum_{j=0}^{2k+1}\frac{2^{k+2}(-1)^{\lfloor\frac{k}{2}\rfloor +j+d}}{(2k+1)!} \binom{2k+1}{j}\binom{\frac{j-2}{4}}{d} \zeta(4n-2k)\pi^{2k} \\
+\sum_{k=0}^{\lfloor\frac{d-2}{4}\rfloor}\sum_{j=0}^{4k+2}
    \frac{2^{2k+5}(-1)^{k+j+d}}{(4k+2)!}\binom{4k+2}{j}\binom{\frac{j-2}{4}}{d}
    \left(Q(4n-4k,2)-\frac{7}{8}\zeta(4n-4k) \right) \pi^{4k}.
\end{multline*}
\end{thm}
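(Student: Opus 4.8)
The plan is to convert the identity into a statement about generating functions. Introduce $Q_d(x):=\sum_{n\ge 0}Q(4n,d)x^{4n}$, so $Q_0=1$, $Q_1(x)=\sum_{n\ge 1}\zeta(4n)x^{4n}$, and $[x^{4n}]Q_d=0$ for $0<n<d$. Writing $\zeta(4j_1,\dots,4j_d)=\sum_{k_1>\cdots>k_d>0}\prod_i k_i^{-4j_i}$, interchanging the order of summation and summing the geometric series in each $j_i$, one gets
\[
\sum_{d\ge 0}Q_d(x)\,t^d=\prod_{k\ge 1}\Big(1+\tfrac{t\,x^4}{k^4-x^4}\Big)=\prod_{k\ge 1}\frac{k^4-(1-t)x^4}{k^4-x^4}.
\]
Applying the Weierstrass product $\prod_{k\ge 1}(1-z^4/k^4)=\dfrac{\sin(\pi z)\sinh(\pi z)}{\pi^2z^2}$ to numerator and denominator gives
\[
\sum_{d\ge 0}Q_d(x)\,t^d=\frac{\sin(\pi xw)\,\sinh(\pi xw)}{\sqrt{1-t}\;\sin(\pi x)\,\sinh(\pi x)},\qquad w:=(1-t)^{1/4},
\]
and, with $\theta:=\pi x\big(1-(1-t)^{1/4}\big)$ so that $\pi xw=\pi x-\theta$, the addition formulas turn this into
\[
\sum_{d\ge 0}Q_d(x)\,t^d=\frac{1}{\sqrt{1-t}}\big(\cos\theta-\cot(\pi x)\sin\theta\big)\big(\cosh\theta-\coth(\pi x)\sinh\theta\big).
\]

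Next I would record the depth $\le 2$ closed forms. From $\pi z\cot\pi z=1-2\sum_{m\ge1}\zeta(2m)z^{2m}$ and the substitution $z\mapsto iz$ one gets $Q_1(x)=\tfrac12-\tfrac{\pi x}{4}\big(\cot\pi x+\coth\pi x\big)$ and $\sum_{n\ge1}\zeta(4n-2)x^{4n-2}=\tfrac{\pi x}{4}\big(\coth\pi x-\cot\pi x\big)$. Reading off $[t^2]$ in the generating function (equivalently, $Q_1=-Yf'/f$, $Q_2=Y^2f''/2f$ with $f=\sin\pi x\sinh\pi x/(\pi x)^2$, $Y=(\pi x)^4$) gives $Q_2=\tfrac12\big(Q_1+Q_1^2-\tfrac x4 Q_1'\big)$; substituting the formula for $Q_1$ and simplifying yields the crucial relation $\pi^2x^2\cot(\pi x)\coth(\pi x)=16\,Q_2(x)-14\,Q_1(x)+1$. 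These are exactly the two "primitive" quantities that appear in the statement.

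Then I would expand the trigonometric/hyperbolic products. Each of $\cos\theta\cosh\theta$, $\sin\theta\sinh\theta$, $\cos\theta\sinh\theta\pm\sin\theta\cosh\theta$ satisfies $y^{(4)}=-4y$, hence
\[
\cos\theta\cosh\theta=\sum_{p\ge0}\tfrac{(-4)^p}{(4p)!}\theta^{4p},\qquad \sin\theta\sinh\theta=\sum_{p\ge0}\tfrac{2(-4)^p}{(4p+2)!}\theta^{4p+2},
\]
\[
\cos\theta\sinh\theta+\sin\theta\cosh\theta=\sum_{p\ge0}\tfrac{2(-4)^p}{(4p+1)!}\theta^{4p+1},\qquad \sin\theta\cosh\theta-\cos\theta\sinh\theta=\sum_{p\ge0}\tfrac{4(-4)^p}{(4p+3)!}\theta^{4p+3},
\]
and $\theta^m=(\pi x)^m\big(1-(1-t)^{1/4}\big)^m$. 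One now expands the product of Step 2, regroups the coefficients of $\cos\theta\sinh\theta$ and $\sin\theta\cosh\theta$ (namely $-\coth\pi x$ and $-\cot\pi x$) so that these functions enter only through the symmetric/antisymmetric combinations above, and substitutes the depth $\le2$ identities to express every coefficient via $\zeta$ at even integers and via $Q_2-\tfrac78Q_1$. Collecting the result as a sum of powers $\big(1-(1-t)^{1/4}\big)^m$, the odd powers $m=2k+1$ produce the $\zeta(4n-2k)\pi^{2k}$ terms and the powers $m=4k+2$ (after using $\cot\pi x\coth\pi x=(16Q_2-14Q_1+1)/(\pi^2x^2)$) produce the $\big(Q(4n-4k,2)-\tfrac78\zeta(4n-4k)\big)\pi^{4k}$ terms. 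Finally one extracts $[t^d]$ using
\[
[t^d]\,\frac{\big(1-(1-t)^{1/4}\big)^m}{\sqrt{1-t}}=(-1)^d\sum_{j=0}^m(-1)^j\binom mj\binom{(j-2)/4}{d},
\]
which converts each power into the inner $j$-sums with the generalized binomial $\binom{(j-2)/4}{d}$; matching the scalar prefactors reproduces $\dfrac{2^{k+2}(-1)^{\lfloor k/2\rfloor+j+d}}{(2k+1)!}$ and $\dfrac{2^{2k+5}(-1)^{k+j+d}}{(4k+2)!}$. Since $\big(1-(1-t)^{1/4}\big)^m=O(t^m)$, only $m\le d$ contributes, which recovers the truncations $k\le\lfloor\tfrac{d-1}{2}\rfloor$ and $k\le\lfloor\tfrac{d-2}{4}\rfloor$.

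The hard part is this last assembling/extraction step. Step 4 also produces the "diagonal'' contribution $\cos\theta\cosh\theta$ (powers $m\equiv0\bmod4$) together with the constant $+1$ from the $\cot\coth$ identity; these do not appear explicitly in the statement and must be shown to be absorbed by the depth-$1$ contributions — equivalently, by the boundary values $\zeta(0)=-\tfrac12$, $\zeta(-2m)=0$ that would intervene if the stated right-hand side were summed over all $n\ge1$ rather than only $n\ge d$, which is precisely why the hypothesis $n\ge d$ is needed. Verifying that every sign, power of $2$, and factorial in this collation is exactly as stated is where essentially all of the computational effort lies; the cases $d=1,2,3$ (using $Q(8,2)=\zeta(4,4)$, $Q(12,3)=\zeta(4,4,4)$, etc.) can be checked directly as a sanity test of the bookkeeping.
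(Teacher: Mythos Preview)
Your approach is correct and is genuinely different from the paper's. Both start from the same generating function
\[
\sum_{d\ge0}Q_d(x)t^d=\frac{\sin(\pi x(1-t)^{1/4})\,\sinh(\pi x(1-t)^{1/4})}{\sqrt{1-t}\,\sin\pi x\,\sinh\pi x},
\]
but the paper does \emph{not} use the addition formula. Instead it writes the $[t^d]$-coefficient as $G_d(s)=X_d\,\sqrt[4]{s}\cot\sqrt[4]{s}+Y_d\,\sqrt[4]{s}\coth\sqrt[4]{s}+Z_d\cot\sqrt[4]{s}\coth\sqrt[4]{s}+W_d$ (with $s=\pi^4x^4$), derives a coupled system of recursive first-order ODEs for $(X_d,Y_d,Z_d,W_d)$ by differentiating, passes to bivariate generating series in $(u,v)$, and solves the resulting system in closed form to get $X_d,\dots,W_d$ explicitly. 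Your addition-formula trick $\pi x(1-t)^{1/4}=\pi x-\theta$ achieves in one stroke what the paper's ODE machinery produces after several pages: it gives the decomposition into $\cot$, $\coth$, $\cot\cdot\coth$ and a polynomial part directly, with coefficients the elementary series for $\cos\theta\cosh\theta$, $\sin\theta\sinh\theta$, etc. The paper's route has the virtue of being mechanical (it would port unchanged to other base functions), while yours is shorter and more transparent for this specific $\sin\cdot\sinh$ kernel. Your identity $\pi^2x^2\cot\pi x\coth\pi x=16Q_2-14Q_1+1$ is exactly the paper's stuffle computation $\sum_{m+l=2w}(-1)^m\zeta(2m)\zeta(2l)=4Q(4w,2)-\tfrac72\zeta(4w)$, repackaged.

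One point to clean up: the $\cos\theta\cosh\theta$ piece and the ``$+1$'' from the $\cot\coth$ identity are \emph{not} ``absorbed by depth-$1$ contributions'' or by boundary values $\zeta(0),\zeta(-2m)$; they simply vanish by degree counting. Each such term contributes only powers $(\pi x)^{4p}$ with $4p\le d$ (from your own observation $(1-(1-t)^{1/4})^m=O(t^m)$), hence $p\le\lfloor d/4\rfloor<d\le n$, so $[x^{4n}]$ is zero. This is precisely the paper's one-line remark ``since $W_d(s)$ has degree less than $n$''. Once you state it that way, your final paragraph's worry disappears and the bookkeeping is straightforward.
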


\begin{rem} For $d=2,$ it's easy to prove by stuffle relation that
$$Q(4n, 2) = \frac{1}{2}\sum_{k=1}^{n-1}\zeta(4k)\zeta(4n-4k)-\frac{n-1}{2}\zeta(4n)$$
for $n\geq 2.$ However, it is an intriguing problem to find a compact formula similar to
\eqref{equ:dbld=2}.
\end{rem}

\noindent
{\bf Acknowledgement.} Both authors would like to thank the Morningside Center of Mathematics, Chineses Academy of Science for hospitality when the paper was prepared. HY is partially supported by the Summer Research Grant from York College of Pennsylvania and  JZ is partially supported by NSF grant DMS1162116. They also want to thank the anonymous referee who pointed out a few inaccuracies and possible improvement in the original draft.

\section{The generating function of $Q(4n,d)$}
Recall that the symmetric function of the infinitely many variables $x_1, x_2,\cdots$
form a subring Sym of $\bbQ[x_1, x_2,\cdots]$ which is invariant under all the permutations
of the variables. Let $e_j=\sum_{k_1<\dots<k_j} x_{k_1}\dots x_{k_j}$ be the $j$-th
elementary function. Following Hoffman \cite{Hoffman2012} let's consider its
generating function
\begin{equation*}
E(t)= \prod_{j=1}^{\infty} (1+tx_j) = \sum_{j=0}^{\infty} e_j t^j
\end{equation*}
and define $\vep: {\rm Sym}\to \R$ to be the evaluation map such that
$\vep(x_j)= \displaystyle\frac{1}{j^4}.$ Let
\begin{equation*}
    F(s,t)=\prod_{j=1}^{\infty}(1+ tsx_j+ ts^2x_j^2+ \cdots).
\end{equation*}
Then it is not hard to see that the generating function of $Q(4n,d)$ is given by
\begin{equation*}
\vep\big( F(s,t) \big) = \sum_{n=0}^{\infty} Q(4n,d) t^ds^n.
\end{equation*}
First we need the following lemma.
\begin{lem}
We have
\begin{equation*}
 \vep(F(s,t)) =\frac{ \sin \pi\sqrt[4]{s(1-t)}\cdot \sinh \pi\sqrt[4]{s(1-t)}}
 {\sqrt{1-t}  \sin \pi \sqrt[4]{s} \cdot \sinh \pi\sqrt[4]{s} }.
\end{equation*}
\end{lem}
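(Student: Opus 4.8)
The plan is to reduce the infinite product defining $F(s,t)$ to closed form by summing the inner geometric series, and then to recognize the resulting product over $j$ as a ratio of Weierstrass products for $\sin$ and $\sinh$.

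First I would observe that, for $s$ near $0$,
\[
F(s,t)=\prod_{j=1}^\infty\Bigl(1+\frac{tsx_j}{1-sx_j}\Bigr)=\prod_{j=1}^\infty\frac{1-s(1-t)x_j}{1-sx_j},
\]
so that applying the evaluation $\vep$ (the substitution $x_j\mapsto j^{-4}$, legitimate since the product converges absolutely and uniformly on compacta once $|s|<1$) gives
\[
\vep\bigl(F(s,t)\bigr)=\prod_{j=1}^\infty\frac{\,1-s(1-t)/j^4\,}{\,1-s/j^4\,}.
\]

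Next I would record the elementary identity obtained by multiplying the classical expansions $\dfrac{\sin\pi z}{\pi z}=\prod_{j\ge1}\bigl(1-z^2/j^2\bigr)$ and $\dfrac{\sinh\pi z}{\pi z}=\prod_{j\ge1}\bigl(1+z^2/j^2\bigr)$, namely
\[
\prod_{j=1}^\infty\Bigl(1-\frac{z^4}{j^4}\Bigr)=\frac{\sin\pi z\,\sinh\pi z}{\pi^2 z^2}.
\]
A small point worth noting is that the right-hand side is invariant under $z\mapsto iz$ and $z\mapsto-z$, hence is genuinely a function of $z^4$; so the formula $\prod_j(1-w/j^4)=\sin(\pi w^{1/4})\sinh(\pi w^{1/4})/(\pi^2 w^{1/2})$ is unambiguous for any choice of fourth root, and we may apply it with $w=s$ and with $w=s(1-t)$.

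Finally I would take the ratio of the two instances: the numerator product equals $\sin\pi\sqrt[4]{s(1-t)}\,\sinh\pi\sqrt[4]{s(1-t)}/\bigl(\pi^2\sqrt{s(1-t)}\bigr)$, the denominator product equals $\sin\pi\sqrt[4]{s}\,\sinh\pi\sqrt[4]{s}/\bigl(\pi^2\sqrt{s}\bigr)$, the factors $\pi^2$ cancel, and $\sqrt{s}/\sqrt{s(1-t)}=1/\sqrt{1-t}$, which yields exactly the asserted formula. The only genuine subtlety is bookkeeping: ensuring the rearrangements are performed at the level of convergent products (equivalently, as an identity of formal power series in $s$ with coefficients in $\Q[t]$ after clearing $\sqrt{1-t}$), and tracking the branch of $\sqrt[4]{\,\cdot\,}$ consistently — but by the symmetry remark above no real obstruction arises.
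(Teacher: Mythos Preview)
Your argument is correct and is essentially identical to the paper's own proof: both sum the inner geometric series to rewrite $F(s,t)$ as $E(-s(1-t))/E(-s)$, evaluate $\vep(E(-t))=\prod_j(1-t/j^4)$ via the product formulas for $\sin$ and $\sinh$, and then take the ratio. Your additional remarks about absolute convergence and the fourth-root ambiguity are welcome clarifications but do not change the route.
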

\begin{proof}
We have
\begin{align*}
\prod_{j=1}^{\infty} (1+ tsx_j+ ts^2x_j^2+ \cdots)
=&\prod_{j=1}^{\infty} \left(1+ t\frac{sx_j}{1-sx_j}\right)\\
=&\frac{\prod_{j=1}^{\infty}(1-s(1-t)x_j)}{\prod_{j=1}^{\infty}(1-sx_j)}
= \frac{E(-s(1-t))}{E(-s)}.
\end{align*}
Further,
\begin{equation*}
\vep(E(-t)) = \prod_{i=1}^{\infty} \left(1-\frac{t}{i^4}\right)
= \prod_{i=1}^{\infty}\left(1- \frac{\sqrt{t}}{i^2}\right)\left(1+\frac{\sqrt{t}}{i^2}\right)
= \frac{\sin \pi\sqrt[4]{t} \cdot\sinh \pi\sqrt[4]{t}} {\pi^2\sqrt{t} }.
\end{equation*}
The lemma follows immediately.
\end{proof}

Let $f(x)=  \sin x \cdot  \sinh x/(2x^2).$
The following lemma provides its series expansion.
\begin{lem}
We have
\begin{equation*}
f(x)= \sum_{k=0}^{\infty} \frac{(-1)^k 4^k}{(4k+2)!} x^{4k}.
\end{equation*}
\end{lem}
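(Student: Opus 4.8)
The plan is to reduce $\sin x\,\sinh x$ to a short combination of cosines and then read off the Taylor coefficients directly. First I would use the identity $\sinh x=-i\sin(ix)$ together with the product-to-sum formula $\sin A\sin B=\tfrac12\bigl(\cos(A-B)-\cos(A+B)\bigr)$ to obtain
\[
\sin x\,\sinh x=-i\,\sin x\,\sin(ix)=\frac{i}{2}\Bigl(\cos\bigl((1+i)x\bigr)-\cos\bigl((1-i)x\bigr)\Bigr).
\]
Since $\cos$ is entire, I may expand both terms by their everywhere-convergent power series, and the key simplification is that $(1\pm i)^2=\pm 2i$, hence $(1\pm i)^{2m}=(\pm 2i)^m=2^m(\pm i)^m$.

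Next I would compute the coefficient of $x^{2m}$ in $\cos((1+i)x)-\cos((1-i)x)$, which is $\frac{(-1)^m2^m}{(2m)!}\bigl(i^m-(-i)^m\bigr)$. Because $(-i)^m=(-1)^m i^m$, this vanishes whenever $m$ is even, so only odd $m$ contribute; writing $m=2k+1$ and using $i^{2k+1}=(-1)^k i$, the coefficient of $x^{4k+2}$ in $\cos((1+i)x)-\cos((1-i)x)$ becomes $-\,\dfrac{2^{2k+2}(-1)^k i}{(4k+2)!}$. Multiplying by $\tfrac{i}{2}$ and then dividing by $2x^2$, as in the definition $f(x)=\sin x\,\sinh x/(2x^2)$, collapses the factors of $i$ and the powers of $2$, leaving precisely
\[
f(x)=\sum_{k=0}^{\infty}\frac{(-1)^k\,2^{2k}}{(4k+2)!}\,x^{4k}=\sum_{k=0}^{\infty}\frac{(-1)^k4^k}{(4k+2)!}\,x^{4k},
\]
which is the assertion; the identity holds for all $x\in\C$ since every series in sight converges everywhere.

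There is no genuine obstacle here: the only care needed is the bookkeeping of the powers of $i$ and $2$, and the observation that the vanishing of the even-$m$ terms is exactly what forces the surviving exponents to be multiples of $4$. If one prefers to avoid complex numbers, an equivalent route is to form the Cauchy product of $\sin x=\sum_a\frac{(-1)^a x^{2a+1}}{(2a+1)!}$ and $\sinh x=\sum_b\frac{x^{2b+1}}{(2b+1)!}$: the coefficient of $x^{2N+2}$ in $\sin x\,\sinh x$ is $\frac{1}{(2N+2)!}\sum_{a=0}^{N}(-1)^a\binom{2N+2}{2a+1}$, and recognizing the inner sum as $\Im\bigl((1+i)^{2N+2}\bigr)=\Im\bigl((2i)^{N+1}\bigr)$ shows it equals $0$ for $N$ odd and $2^{2k+1}(-1)^k$ for $N=2k$; dividing by $2x^2$ again yields the stated series.
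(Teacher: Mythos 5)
Your argument is correct and is essentially the paper's own proof in a slightly different costume: the paper expands $f$ via $\sin x=(e^{ix}-e^{-ix})/(2i)$ into exponentials $e^{\pm(1\pm i)x}$ and reads off coefficients using $(1\pm i)^{2n}=(\pm 2i)^n$, which is exactly what your product-to-sum reduction to $\cos((1\pm i)x)$ (and likewise your Cauchy-product variant via $\Im\bigl((1+i)^{2N+2}\bigr)$) amounts to. The bookkeeping of powers of $i$ and $2$ checks out, so the proposal is fine as written.
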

\begin{proof}
Using the well-known formula $\sin x= (e^{ix}-e^{-ix})/(2i)$
we obtain
\begin{align*}
f(x)&= \frac12\cdot \frac{e^{ix}-e^{-ix}}{2ix} \cdot \frac{e^{x}-e^{-x}}{2x}\\
&= \frac{e^{(i+1)x}+e^{-(i+1)x}-(e^{(i-1)x}+e^{-(i-1)x})}{8ix^2}\\
&= \frac{1}{4ix^2}\left(\sum_{n=0}^{\infty}\frac{(2i)^{n}x^{2n}}{(2n)!}- \sum_{n=0}^{\infty}\frac{(-2i)^nx^{2n}}{(2n)!}\right)\\
&= \sum_{k=0}^{\infty}\frac{(-1)^k4^k}{(4k+2)!}x^{4k},
\end{align*}
as desired.
\end{proof}

\section{Proof of Theorem \ref{thm:main}}
Let $g(t)= f(\sqrt[4]{t}).$
Then
\begin{equation*}
\frac{g(s(1-t))}{g(s)}=\vep(F(s/\pi^4,t))
=\frac{1}{g(s)}\sum_{k=0}^{\infty} \frac{(-1)^k 4^k}{(4k+2)!} s^{k}(1-t)^k.
\end{equation*}
Write
\begin{equation*}
\frac{g(s(1-t))}{g(s)}=\sum_{d=0}^{\infty} G_d(s)t^d.
\end{equation*}
By the above expression, we have
\begin{equation*}
G_d(s)=\frac{(-s)^d }{g(s) d!}D^d g(s),
\end{equation*}
where $D^d$ denotes the $d$-th derivative with respect to $s$. Set
\begin{equation} \label{equ:GdsDefn}
G_d(s)=X_d(s)\sqrt[4]{s}\cot \sqrt[4]{s} +Y_d(s)\sqrt[4]{s}\coth \sqrt[4]{s}
+Z_d(s)\cot \sqrt[4]{s}\coth \sqrt[4]{s}+ W_d(s)
\end{equation}
which yields easily
\begin{align*}
\frac{(-1)^sD^d g(s)}{d!}
&= X_d(s)s^{-d-\frac{1}{4}}\cos s^{\frac{1}{4}}\sinh s^{\frac{1}{4}}
    + Y_d(s)s^{-d-\frac{1}{4}}\sin s^{\frac{1}{4}}\cosh s^{\frac{1}{4}}\\
&+Z_d(s)s^{-d-\frac{1}{2}}\cos s^{\frac{1}{4}}\cosh s^{\frac{1}{4}}
    + W_d(s)s^{-d-\frac{1}{2}}\sin s^{\frac{1}{4}}\sinh s^{\frac{1}{4}}.
\end{align*}
To determine the coefficients $X_d(s), Y_d(s), Z_d(s)$ and $W_d(s)$ we
differentiate the both sides of the above equation to get the following
system of recursive differential equations
\begin{equation*}
\left\{\aligned
(d+1)X_{d+1}(s) &= -sX'_d(s)+ \Big(d+\frac{1}{4}\Big)X_d(s) -\frac{1}{4}Z_d(s)
-\frac{1}{4}W_d(s),\\
(d+1)Y_{d+1}(s) &= -sY'_d(s)+ \Big(d+\frac{1}{4}\Big)Y_d(s) +
\frac{1}{4}Z_d(s)-\frac{1}{4}W_d(s),\\
(d+1)Z_{d+1}(s)&=
-\frac{\sqrt{s}}{4}X_d(s) -\frac{\sqrt{s}}{4}Y_d(s) -
s Z'_d(s) +\Big(d+\frac{1}{2}\Big)Z_d(s),\\
(d+1)W_{d+1}(s)&= \frac{\sqrt{s}}{4}X_d(s) -
\frac{\sqrt{s}}{4}Y_d(s) + \Big(d+\frac{1}{2}\Big)W_d(s) - sW'_d(s),
\endaligned\right.
\end{equation*}
with the initial conditions $X_0(s)=Y_0(s)=Z_0(s)=0$ and $W_0(s)=1$.
Let $x_d(u)= X_d(u^2), y_d(u)=Y_d(u^2), z_d(u)=Z_d(u^2)$ and $w_d(u)=
W_d(u^2).$ The above system is changed into the following system:
\begin{equation}\label{equ:DEsystem}
\left\{\aligned
(d+1)x_{d+1}(u)&= -\frac{u}{2}x'_d(u)+\Big(d+\frac{1}{4}\Big)x_d(u)-
\frac{1}{4}z_d(u)-\frac{1}{4}w_d(u),\\
(d+1)y_{d+1}(u)&=
-\frac{u}{2}y'_d(u)+\Big(d+\frac{1}{4}\Big)y_d(u)+\frac{1}{4}z_d(u)-\frac{1}{4}w_d(u),\\
(d+1)z_{d+1}(u)&= -\frac{u}{4}x_d(u)
-\frac{u}{4}y_d(u)-\frac{u}{2}z'_d(u)+\Big(d+\frac{1}{2}\Big)z_d(u),\\
(d+1)w_{d+1}(u)&=
\frac{u}{4}x_d(u)-\frac{u}{4}y_d(u)+\Big(d+\frac{1}{2}\Big)w_d(u)-\frac{u}{2}w'_d(u).
\endaligned\right.
\end{equation}
Define
\begin{equation}\label{equ:genFall}
\left\{\aligned
\ga(u,v)=&\sum_{d\geq 0} x_d(u)v^d=\sum_{d\ge 0} \tx_d(v)u^d,\\
\gb(u,v)=&\sum_{d\geq 0} y_d(u)v^d=\sum_{d\ge 0} \ty_d(v)u^d,\\
\gam(u,v)=&\sum_{d\geq 0} z_d(u)v^d=\sum_{d\ge 0} \tz_d(v)u^d,\\
\gd(u,v)=&\sum_{d\geq 0} w_d(u)v^d=\sum_{d\ge 0} \tw_d(v)u^d.
\endaligned\right.
\end{equation}
Multiplying the system \eqref{equ:DEsystem} by $v^d$ and then taking the sum
$\sum_{d\ge 0}$ we get:
\begin{equation*}
\left\{\aligned
    \frac{\partial \ga}{\partial v}=&v \frac{\partial \ga}{\partial
v}+\frac14 \ga
    -\frac{u}2 \frac{\partial \ga}{\partial u}-\frac14 \gam-\frac14 \gd,\\
    \frac{\partial \gb}{\partial v}=&v \frac{\partial \gb}{\partial
v}+\frac14 \gb
    -\frac{u}2 \frac{\partial \gb}{\partial u}+\frac14 \gam-\frac14 \gd,\\
    \frac{\partial \gam}{\partial v}=&v \frac{\partial \gam}{\partial
v}+\frac12 \gam
    -\frac{u}2 \frac{\partial \gam}{\partial u}-\frac{u}4 \ga-\frac{u}4 \gb,\\
    \frac{\partial \gd}{\partial v}=&v \frac{\partial \gd}{\partial
v}+\frac12 \gd
    -\frac{u}2 \frac{\partial \gd}{\partial u}+\frac{u}4 \ga-\frac{u}4 \gb.
\endaligned\right.
\end{equation*}
Comparing the coefficients of $u^n$ we get
\begin{equation}\label{equ:DEsystemTilde}
\left\{\aligned
\tx_n'(v)=&v\tx_n'(v)+\frac14\tx_n(v)-\frac{n}2\tx_n(v)-\frac14\tz_n(v)-\frac14\tw_n(v),\\
\ty_n'(v)=&v\ty_n'(v)+\frac14\ty_n(v)-\frac{n}2\ty_n(v)+\frac14\tz_n(v)-\frac14\tw_n(v),\\
\tz_n'(v)=&v\tz_n'(v)+\frac12\tz_n(v)-\frac{n}2\tz_n(v)-\frac14\tx_{n-1}(v)-\frac14\ty_{n-1}(v),\\
\tw_n'(v)=&v\tw_n'(v)+\frac12\tw_n(v)-\frac{n}2\tw_n(v)+\frac14\tx_{n-1}(v)-\frac14\ty_{n-1}(v),\\
\endaligned\right.
\end{equation}
By definition \eqref{equ:genFall}, we see that the system has the following initial values:
$\tx_n(0)=0, \ty_n(0)= 0, \tz_n(0)= 0$ for all $n\ge 0$ and $\tw_n(0)=0$ for all $n\ge 1$.
But for $\tw_0(v)$ we have
from \eqref{equ:DEsystem}
\begin{equation*}
w_0(0)=1, \quad w_d(0)=\frac{2d-1}{2d}w_{d-1}(0) \quad \forall d\ge 1.
\end{equation*}
It follows that $w_d(0)=\binom{2d}{d}/2^{2d}$ which yields easily
\begin{equation*}
\tw_0(v)=\sum_{d\ge 0}  w_d(0) v^d=(1-v)^{-\frac{1}{2}}.
\end{equation*}
Similarly we see that $\tz_0(v)=0$. Solving \eqref{equ:DEsystemTilde} recursively starting
from the first two equations in \eqref{equ:DEsystemTilde}
we find the following functions are the unique solution satisfying the initial conditions:
\begin{equation*}
\left\{\aligned
\tx_n(v)&=\sum_{j=0}^{2n+1} \frac{2^n(-1)^{\lfloor\frac{n+2}{2}\rfloor+j}}{j!(2n+1-j)!} (1-v)^{\frac{j-2}{4}};\\
\ty_n(v)&=\sum_{j=0}^{2n+1} \frac{2^n(-1)^{\lfloor \frac{n+3}{2}\rfloor+j}}{j!(2n+1-j)!}(1-v)^{\frac{j-2}{4}};\\
\tz_n(v)&=(1-(-1)^n)\sum_{j=0}^{2n}\frac{2^{n-1}(-1)^{\frac{n-1}{2}+j}}{j!(2n-j)!}(1-v)^{\frac{j-2}{4}};\\
\tw_n(v)&=(1+(-1)^n)\sum_{j=0}^{2n}\frac{2^{n-1}(-1)^{\frac{n}{2}+j}}{j!(2n-j)!}(1-v)^{\frac{j-2}{4}}.\\
\endaligned\right.
\end{equation*}
Using \eqref{equ:genFall} we can solve $x_n(v), y_n(v), z_n(v)$ and $w_n(v)$ and get
\begin{align*}
x_d(u)&= \sum_{n=0}^{\lfloor\frac{d-1}{2}\rfloor}\sum_{j=0}^{2n+1}\frac{2^n(-1)^{\lfloor\frac{n+2}{2}\rfloor+j+d}}{(2n+1)!}
    \binom{2n+1}{j}\binom{\frac{j-2}{4}}{d}u^n;\\
y_d(u)&= \sum_{n=0}^{\lfloor\frac{d-1}{2}\rfloor}\sum_{j=0}^{2n+1}\frac{2^n(-1)^{\lfloor\frac{n+3}{2}\rfloor+j+d}}
    {(2n+1)!}\binom{2n+1}{j}\binom{\frac{j-2}{4}}{d} u^n;\\
z_d(u)&= \sum_{n=0}^{2\lfloor\frac{d-2}{4}\rfloor +1}\sum_{j=0}^{2n}(1-(-1)^n)
    \frac{2^{n-1}(-1)^{\frac{n-1}{2}+j+d}}{(2n)!}\binom{2n}{j}\binom{\frac{j-2}{4}}{d}u^n;\\
w_d(u)&= \sum_{n=0}^{2\lfloor \frac{d}{4}\rfloor} \sum_{j=0}^{2n}
    (1+(-1)^n)\frac{2^{n-1}(-1)^{\frac{n}{2}+j+d}}{(2n)!}\binom{2n}{j}
\binom{\frac{j-2}{4}}{d} u^n.
\end{align*}
Thus
\begin{align*}
X_d(s)&= \sum_{n=0}^{\lfloor\frac{d-1}{2}\rfloor}\sum_{j=0}^{2n+1}\frac{2^n(-1)^{\lfloor\frac{n+2}{2}\rfloor+j+d}}{(2n+1)!}
    \binom{2n+1}{j}\binom{\frac{j-2}{4}}{d}s^{\frac{n}{2}};\\
Y_d(s)&=\sum_{n=0}^{\lfloor\frac{d-1}{2}\rfloor}\sum_{j=0}^{2n+1}\frac{2^n(-1)^{\lfloor\frac{n+3}{2}\rfloor+j+d}}
    {(2n+1)!}\binom{2n+1}{j}\binom{\frac{j-2}{4}}{d} s^{\frac{n}{2}};\\
Z_d(s)&= \sum_{n=0}^{\lfloor\frac{d-2}{4}\rfloor}\sum_{j=0}^{4n+2}
    \frac{2^{2n+1}(-1)^{n+j+d}}{(4n+2)!}\binom{4n+2}{j}\binom{\frac{j-2}{4}}{d} s^{n+1/2};\\
W_d(s)&= \sum_{n=0}^{\lfloor \frac{d}{4}\rfloor} \sum_{j=0}^{4n}
    \frac{2^{2n}(-1)^{n+j+d}}{(4n)!}\binom{4n}{j}
\binom{\frac{j-2}{4}}{d} s^n.
\end{align*}
By the well-known formulas
\begin{equation*}
z\cot z =-2\sum_{n=0}^{\infty} \frac{\zeta(2n)}{{\pi}^{2n}} z^{2n}, \quad
z\coth z= -2\sum_{n=0}^{\infty} (-1)^n\frac{\zeta(2n)}{{\pi}^{2n}}z^{2n},
\end{equation*}
we obtain
\begin{equation*}
\sqrt[4]{s}\cot \sqrt[4]{s}= -2\sum_{n=0}^{\infty} \frac{\zeta(2n)}{{\pi}^{2n}} s^{\frac{n}{2}},  \quad
\sqrt[4]{s}\coth \sqrt[4]{s}=-2\sum_{n=0}^{\infty} (-1)^{n}\frac{\zeta(2n)}{{\pi}^{2n}}s^{\frac{n}{2}},
\end{equation*}
and
\begin{equation*}
 \sqrt{s} \cot \sqrt[4]{s}\cdot \coth \sqrt[4]{s} = 4\sum_{k=0}^{\infty}
 \sum_{m+l=k} (-1)^{m}\frac{\zeta(2m)\zeta(2l)}{\pi^{2k}}s^{\frac{k}{2}}
 = 4\sum_{k=0}^{\infty}
 \sum_{m+l=2k} (-1)^{m}\frac{\zeta(2m)\zeta(2l)}{\pi^{4k}}s^{k}.
\end{equation*}
Here by exchanging $m$ and $l$ we notice that the inner sum vanishes if $k$ is odd.
Hence the coefficient of $s^n$ in $G_d(\pi^4 s)$ is
\begin{align*}
Q(4n,d)=&2\sum_{k=0}^{\lfloor\frac{d-1}{2}\rfloor}
\sum_{j=0}^{2k+1}\frac{2^k(-1)^{\lfloor\frac{k}{2}\rfloor +j+d}}{(2k+1)!}
    \binom{2k+1}{j}\binom{\frac{j-2}{4}}{d} \zeta(4n-2k)\pi^{2k}\\
+&2\sum_{k=0}^{\lfloor\frac{d-1}{2}\rfloor}
    \sum_{j=0}^{2k+1}(-1)^{k}\frac{2^{k}(-1)^{\lfloor\frac{k+1}{2}\rfloor+j+d}}{(2k+1)!}
    \binom{2k+1}{j}\binom{\frac{j-2}{4}}{d} \zeta(4n-2k)\pi^{2k}\\
+&4\sum_{k=0}^{\lfloor\frac{d-2}{4}\rfloor}\sum_{j=0}^{4k+2}
    \frac{2^{2k+1}(-1)^{k+j+d}}{(4k+2)!}\binom{4k+2}{j}\binom{\frac{j-2}{4}}{d}
    \left(\sum_{\substack{m,l\ge 0, \\ m+l=2n-2k}}
    (-1)^{m}\zeta(2m)\zeta(2l)\right) \pi^{4k}
\end{align*}
since $W_d(s)$ has degree less than $n$. Observe that the first two lines are the same and
for any positive integer $w$
\begin{align*}
 \sum_{\substack{m,l\ge 0, \\ m+l=2w }}  (-1)^{m}\zeta(2m)\zeta(2l)
=&2\sum_{l=1}^{w-1}\zeta(4l)\zeta(4w-4l)-\sum_{l=1}^{2w-1}\zeta(2l)\zeta(4w-2l)-\zeta(4w)\\
=& 4Q(4w,2)+(2w-3)\zeta(4w)-\frac{4w+1}{2}\zeta(4w)\\
=& 4Q(4w,2)-\frac{7}{2}\zeta(4w)
\end{align*}
by stuffle relation $\zeta(4m)\zeta(4l)=\zeta(4m,4l)+\zeta(4l,4m)+\zeta(4m+4l)$
and equation \eqref{equ:dbld=2}. Therefore we finally get
\begin{multline*}
Q(4n,d)=4\sum_{k=0}^{\lfloor\frac{d-1}{2}\rfloor}
\sum_{j=0}^{2k+1}\frac{2^k(-1)^{\lfloor\frac{k}{2}\rfloor +j+d}\zeta(4n-2k)\pi^{2k}}{(2k+1)!} \binom{2k+1}{j}\binom{\frac{j-2}{4}}{d}\zeta(4n-2k)\pi^{2k} \\
+4 \sum_{k=0}^{\lfloor\frac{d-2}{4}\rfloor}\sum_{j=0}^{4k+2}
    \frac{2^{2k+1}(-1)^{k+j+d}}{(4k+2)!}\binom{4k+2}{j}\binom{\frac{j-2}{4}}{d}
    \left(4Q(4n-4k,2)-\frac{7}{2}\zeta(4n-4k) \right) \pi^{4k}.
\end{multline*}
This concludes the proof of Theorem \ref{thm:main} and this paper.

\end{document}